\documentclass[a4paper,11pt]{amsart}
\usepackage{stmaryrd}
\usepackage{amsfonts}
\usepackage{amssymb}
\usepackage{amsmath}
\usepackage{mathrsfs}
\usepackage{amsmath,amssymb,amsthm,latexsym,amscd,mathrsfs}
\usepackage{indentfirst}
\usepackage{graphicx}
\usepackage{booktabs}
\usepackage{array}
\usepackage{chemarrow}
\usepackage{verbatim}
\setlength{\parindent}{2em}
\setlength{\parskip}{3pt plus1pt minus2pt}
\setlength{\baselineskip}{20pt plus2pt minus1pt}
\setlength{\textheight}{21.5true cm}
\setlength{\textwidth}{14.5true cm}
\setlength{\headsep}{10truemm}
\addtolength{\hoffset}{-12mm}


\newcommand{\ROM}[1]{\mathrm{\uppercase\expandafter{\romannumeral#1}}}
\theoremstyle{definition}

\newtheorem{thm}{Theorem}[section]
\newtheorem{lem}{Lemma}[section]

\newtheorem{rem}{Remark}[section]
\newtheorem{prop}{Proposition}[section]

\newtheorem{ack}{Acknowledgements}   
\makeatletter

\makeatletter

\makeatother

\title[on focal submanifolds of isoparametric hypersurfaces and Simons formula]{\textbf{on focal submanifolds of isoparametric hypersurfaces and Simons formula}}

\subjclass[2000]{ 53A30, 53C42.}
\date{}
\keywords{isoparametric hypersurface, focal submanifold, semiparallel, normally flat}
\thanks {$\dagger$ The second author is the corresponding author. }
\author[Q. C. Li]{Qichao Li}
\address{School of Mathematical Sciences, Laboratory of Mathematics and Complex Systems, Beijing Normal
University, Beijing 100875, China} \email{qichaoli@mail.bnu.edu.cn}
\author[L. Zhang]{Li {Zhang}$^\dagger$}
\address{School of Mathematical Sciences, Laboratory of Mathematics and Complex Systems, Beijing Normal
University, Beijing 100875, China} \email{zhangli@mail.bnu.edu.cn}

\begin{document}

\maketitle
\begin{abstract}
The focal submanifolds of isoparametric hypersurfaces in spheres are all minimal Willmore submanifolds, mostly being $\mathcal{A}$-manifolds
in the sense of A.Gray but rarely Ricci-parallel (\cite{QTY},\cite{LY},\cite{TY3}). In this paper we study the geometry of the focal submanifolds via Simons formula. We show that all the focal submanifolds with $g\geq3$ are not normally flat by estimating the normal scalar curvatures. Moreover, we give a complete classification of the semiparallel submanifolds among the focal submanifolds.
\end{abstract}

\section{Introduction}
Isoparametric hypersurfaces in a unit sphere are hypersurfaces with constant principal curvatures.
They consist of a one-parameter family
of parallel hypersurfaces which laminates the unit sphere with two focal submanifolds at
the end. It is remarkable that the focal submanifolds of isoparametric hypersurfaces provide infinitely many spherical submanifolds with abundant intrinsic and extrinsic geometric properties. For instance, they are all minimal Willmore submanifolds in unit spheres and mostly $\mathcal{A}$-manifolds in the sense of A.Gray (\cite{Gra}), except for two cases of $g=6$ (cf. \cite{QTY},\cite{TY2},\cite{LY},\cite{Xie}). As is well known, an Einstein manifold minimally immersed in a unit sphere is Willmore, however the focal submanifolds are rarely Ricci-parallel, thus rarely Einstein  (cf. \cite{TY3}).

In this paper we study the geometry of the focal submanifolds in terms of Simons formula (see \cite{Sim},\cite{CdK}).
To state our results clearly, we firstly give some preliminaries on isoparametric hypersurfaces and their focal submanifolds in unit spheres.

A remarkable
result of M{\"u}nzner (\cite{Mun}) states that the number $g$ of distinct principal curvatures must be 1,2,3,4 or 6 by the arguments of differential topology. Moreover, the isoparametric hypersurfaces  can be represented as the regular level sets  in a unit sphere of
  some \emph{Cartan-M{\"u}nzner polynomial} $F$. By a \emph{Cartan-M{\"u}nzner polynomial}, we mean a homogeneous polynomial $F$ of degree $g$ on $\mathbb{R}^{n+2}$ satisfying
the so-called \emph{Cartan-M{\"u}nzner equations}:
\begin{equation}\label{cm}
\left\{ \begin{array}{ll}
|\nabla F|^2  =   g^2 r^{2g-2}, \qquad r = |x|, \\
 \Delta F  =  c r^{g-2}, \qquad c = g^2 (m_2-m_1)/2, \nonumber
\end{array}\right.
\end{equation}
where $\nabla F$ and $\Delta F$ are the gradient and Laplacian of
$F$ on $\mathbb{R}^{n+2}$.  The function
$f=F|_{\mathbb{S}^{n+1}}$ takes values in $[-1, 1]$. The level sets $f^{-1}(t)~(-1<t<1)$ gives the isoparametric hypersurfaces. In fact, if we order the principal curvatures $\lambda_1>\cdots>\lambda_g$
with multiplicities $m_1,\cdots,m_g$, then $m_i=m_{i+2}(\mathrm{mod}~g)$, in particular, all multiplicities are equal
when $g$ is odd. On the other hand, the critical sets $M_+:=f^{-1}(1)$ and $M_-:=f^{-1}(-1)$ are connected submanifolds with codimensions $m_1+1$ and $m_2+1$ in $\mathbb{S}^{n+1}$, called \emph{focal submanifolds} of this isoparametric family.

The isoparametric hypersurfaces $M^n$ with $g \leq3$ in $\mathbb{S}^{n+1}$ were classified by Cartan to be homogeneous (\cite{Car1},\cite{Car2}).
For $g = 6$, Abresch (\cite{Abr}) proved that $m_1=m_2= 1$ or $2$. Dorfmeister-Neher (\cite{DN}) and Miyaoka (\cite{Miy3}) showed they are homogeneous, respectively.
For  $g=4$,
all the isoparametric hypersurfaces are recently proved to be
OT-FKM type (\cite{OT},\cite{FKM}) or homogeneous with $(m_1,m_2)=(2, 2), (4, 5)$, except possibly for the unclassified case with $(m_1,m_2)=(7, 8)$ or $(8,7)$ (\cite{CCJ},\cite{Imm},\cite{Chi}).

Recently, Tang and Yan (\cite{TY3}) provided a complete determination for which focal
submanifolds of $g=4$ are Ricci-parallel except for the unclassified case $(m_1, m_2)=(7,8)$ or $(8,7)$. More precisely, they proved
\begin{thm}(\cite{TY3})\label{thmQTY}
{\itshape For the focal submanifolds of isoparametric
hypersurfaces in spheres with $g=4$,
we have
\begin{itemize}
\item[(i)] The $M_+$ of OT-FKM type is Ricci parallel if and only if $(m_1 ,m_2 ) = (2,1),(6,1)$, or it is diffeomorphic to $Sp(2)$ in the homogeneous case with $(m_1 ,m_2 ) = (4,3)$; While the $M_-$ of OT-FKM type is Ricci-parallel if and only if $(m_1 ,m_2 ) = (1,k)$.
\item[(ii)] For $(m_1,m_2)=(2,2)$, the one
           diffeomorphic to $\widetilde{G}_2(\mathbb{R}^5)$ is Ricci-parallel, while the other
           diffeomorphic to $\mathbb{C}P^3$ is not.
\item[(iii)]For $(m_1,m_2)=(4,5)$, both are not Ricci-parallel.
\end{itemize}
}
\end{thm}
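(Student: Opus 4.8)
The plan is to reduce the Ricci-parallel condition to a statement purely about the second fundamental form and then run a case analysis organized by the multiplicities $(m_1,m_2)$. Since each focal submanifold $M_\pm^k$ is minimal in $\mathbb{S}^{n+1}$, the Gauss equation gives $\mathrm{Ric} = (k-1)\,\Id - \mathcal{S}$, where $\mathcal{S} = \sum_\alpha A_\alpha^2$ is the sum of the squares of the shape operators $A_\alpha$ along an orthonormal normal frame $\{e_\alpha\}$; the mean-curvature cross-terms drop out by minimality. Consequently $\nabla\mathrm{Ric}=0$ is equivalent to the parallelism of the symmetric endomorphism field $\mathcal{S}$, and the whole problem becomes: for which $(m_1,m_2)$ is $\mathcal{S}$ parallel on $M_\pm$?

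First I would record the explicit second fundamental form, which for the OT-FKM examples is the crucial input. At a point $x\in M_+$ one has the orthonormal normal frame $\{P_0x,\dots,P_{m_1}x\}$ built from the Clifford system $\{P_\alpha\}$, and the shape operators $A_\alpha$ are given, up to sign and tangential projection, by the action of the $P_\alpha$; an analogous but more involved description holds on $M_-$. With these formulas, $\mathcal{S}$ and its covariant derivative can be expressed entirely through products $P_\alpha P_\beta$ and the position vector. Computing $\nabla\mathcal{S}$ requires the Codazzi equation together with the normal connection, and here one must be careful: the normal bundle of a focal submanifold with $g\ge 3$ is not flat, so $\nabla_X A_\alpha$ acquires terms $\sum_\beta \omega_{\alpha\beta}(X)A_\beta$ from the normal connection in addition to the tangential Codazzi contribution.

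To organize this I would exploit what is already known from \cite{QTY}, namely that these focal submanifolds are $\mathcal{A}$-manifolds, i.e. their Ricci tensor is cyclic-parallel, $(\nabla_X\mathrm{Ric})(X,X)=0$. This annihilates the totally symmetric part of $\nabla\mathrm{Ric}$ from the start, so that $\nabla\mathrm{Ric}=0$ reduces to the vanishing of its remaining components, cutting down the number of independent identities to verify. The case analysis then becomes a set of algebraic relations among the $P_\alpha P_\beta$: generic multiplicities leave a nonvanishing obstruction, while the exceptional low-multiplicity cases --- $(m_1,m_2)=(2,1),(6,1)$ for $M_+$, together with the homogeneous $(4,3)$ family giving $Sp(2)$, and $m_1=1$ (that is, $(1,k)$) for $M_-$ --- are precisely those in which this obstruction degenerates and must be examined individually. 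I expect the main difficulty to lie exactly here: because the normal connection couples the $\nabla_X A_\alpha$ through the $\omega_{\alpha\beta}$, isolating the sharp algebraic condition on $(m_1,m_2)$ that forces $\nabla\mathcal{S}=0$, as opposed to the weaker cyclic-parallel identity that always holds, is the delicate point, especially in separating the small-multiplicity exceptions from the generic case.

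The two homogeneous families in (ii) and (iii) are treated separately via their homogeneous-space models. The member of the $(2,2)$ family diffeomorphic to $\widetilde{G}_2(\mathbb{R}^5)$ has irreducible isotropy representation, so its induced metric is, up to scale, the unique invariant metric --- the symmetric Einstein metric --- which is automatically Ricci-parallel, giving the positive half of (ii) at once. The real work is to show that the other member, diffeomorphic to $\mathbb{C}P^3$, carries here a homogeneous metric with \emph{reducible} isotropy, hence not the symmetric Fubini--Study metric, and fails $\nabla\mathrm{Ric}=0$; I would verify this by exhibiting a single nonzero component of $\nabla\mathcal{S}$ on the homogeneous model. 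The $(4,5)$ family in (iii) is handled identically, computing $\nabla\mathcal{S}$ on each model and locating a nonvanishing component to rule out Ricci-parallelism.
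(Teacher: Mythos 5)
First, a point of orientation: the paper you are reading does not prove this statement at all. Theorem \ref{thmQTY} is quoted from Tang--Yan \cite{TY3} and is used in Section 3 purely as an external input (combined with Lemma \ref{semiparallel ricci parallel}, it is what rules out semiparallelism for every focal submanifold not on Tang--Yan's list). So your proposal has to be judged on its own merits against the proof in \cite{TY3}. Your opening reduction is correct and is indeed the standard one: by the Gauss equation and minimality, $\mathrm{Ric}=(n-1)\,\mathrm{Id}-\sum_\alpha A_\alpha^2$, so Ricci-parallelism is equivalent to $\nabla\bigl(\sum_\alpha A_\alpha^2\bigr)=0$, and for the OT-FKM families this can in principle be settled from the explicit Clifford-system expression of the shape operators. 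Your isotropy-irreducibility argument for $\widetilde{G}_2(\mathbb{R}^5)$ is also valid (the orbit metric is $SO(5)$-invariant and the isotropy representation $\mathbb{R}^2\otimes\mathbb{R}^3$ of $SO(2)\times SO(3)$ is irreducible, so the induced metric is the symmetric one up to scale), and it is cleaner than brute-force computation.

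That said, there are genuine gaps. First, the proposal is a plan rather than a proof: for the OT-FKM cases the entire content of the theorem is the determination of exactly when $\nabla\sum_\alpha A_\alpha^2$ vanishes, and you never derive the relevant identity; asserting that the exceptional cases ``are precisely those in which this obstruction degenerates'' assumes the conclusion. Second, and more structurally, your organizing principle --- ``a case analysis organized by the multiplicities $(m_1,m_2)$'' --- cannot decide the theorem even in principle, because for $(m_1,m_2)=(4,3)$ there are two incongruent OT-FKM families with identical multiplicities, one homogeneous (whose $M_+$ is diffeomorphic to $Sp(2)$ and is Ricci-parallel) and one inhomogeneous (whose $M_+$ is not). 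Any criterion that settles this case must involve a finer Clifford-algebraic invariant (essentially whether $P_0P_1\cdots P_4=\pm\mathrm{Id}$), not the multiplicities alone; the proposal lists the homogeneous $(4,3)$ family among the exceptions but never explains how its obstruction would distinguish the two families. Third, the inference for $\mathbb{C}P^3$, ``reducible isotropy, hence not the symmetric Fubini--Study metric,'' is a non sequitur: reducibility only says the invariant metrics form a multi-parameter family that \emph{contains} the Fubini--Study metric; identifying which member the orbit metric actually is, and that it fails $\nabla\mathrm{Ric}=0$, is precisely the computation you defer. The same deferral occurs for all of $M_-$ of OT-FKM type and for both focal submanifolds in the $(4,5)$ case, so none of the decisive verifications behind parts (i)--(iii) are actually carried out.
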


Inspired by Tang and Yan's result, we give a complete classification of the \emph{semiparallel submanifolds} among all the focal submanifolds via Simons formula.

Recall that \emph{semiparallel submanifolds}
were introduced by Deprez in 1986 (see \cite{Dep}), as a generalization of \emph{parallel submanifolds}. Given an isometric immersion $f:M\rightarrow N$, denote by $B$  and $\bar{\nabla}$ its second
fundamental form and the connection in $(\mathrm{tangent~~bundle})\oplus(\mathrm{normal~~bundle})$, respectively, the immersion is said to be \emph{semiparallel} if
\begin{equation}\label{semiparallel submanifolds}(\bar{R}(X,Y)\cdot B)(Z,W):=(\bar{\nabla}^2_{XY}B)(Z,W)-(\bar{\nabla}^2_{YX}B)(Z,W)=0\end{equation}
 for any tangent
vectors $X, Y,Z ~\mathrm{and}~~ W$ to $M$, where $\bar{R}$ is the curvature tensor of the connection $\bar{\nabla}$ in $(\mathrm{tangent~~bundle})\oplus(\mathrm{normal~~bundle})$.
 An elegant survey
on the study of semiparallel submanifolds in a real space form can be found in \cite{Lum}.

Our main results state as follows:
\begin{thm}\label{thmzl}
{\itshape For focal submanifolds of isoparametric
hypersurfaces in unit spheres with $g\geq3$, we have
\begin{itemize}
\item[(i)]  For $g=3$: all the focal submanifolds are semiparallel.
\item[(ii)]  For $g=4$: the $M_+$ of OT-FKM type with $(m_1 ,m_2 ) = (2,1),(6,1)$, the $M_+$ of homogeneous OT-FKM type with $(m_1 ,m_2 ) = (4,3)$, all of the $M_-$ of OT-FKM type with $(m_1 ,m_2 ) = (1,k)$ and the focal submanifold diffeomorphic to $\widetilde{G}_2(\mathbb{R}^5)$ with $(m_1,m_2)=(2,2)$ are semiparallel. None of the others are semiparallel.
\item[(iii)] For $g=6$: none of the focal submanifolds are semiparallel.
\end{itemize}
}
\end{thm}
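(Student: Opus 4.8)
The plan is to reduce the semiparallel condition to parallelism of the second fundamental form by means of Simons' formula, and then to read off the classification from Theorem~\ref{thmQTY} together with the known structure of the focal submanifolds. Recall that every focal submanifold $M$ is a compact minimal submanifold of the unit sphere, so its second fundamental form $B$ is a Codazzi tensor with vanishing mean curvature $H$. Commuting covariant derivatives in $\sum_i\bar\nabla^2_{e_ie_i}B$ and using $\mathrm{div}\,B=\bar\nabla H=0$, Simons' formula takes the compact form
\[
\Delta^{\bar\nabla}B(X,Y)=\sum_i(\bar R(e_i,X)\cdot B)(e_i,Y),
\]
where $\Delta^{\bar\nabla}:=\sum_i\bar\nabla^2_{e_ie_i}$ is the rough Laplacian of $\bar\nabla$. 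Hence, if $M$ is semiparallel, i.e. $\bar R\cdot B\equiv 0$, then $\Delta^{\bar\nabla}B=0$. First I would integrate the Bochner identity
\[
\tfrac12\,\Delta|B|^2=\langle\Delta^{\bar\nabla}B,B\rangle+|\bar\nabla B|^2
\]
over the compact $M$; since $\int_M\Delta|B|^2=0$ by the divergence theorem and $\Delta^{\bar\nabla}B=0$, this forces $\bar\nabla B=0$. Together with the trivial implication parallel $\Rightarrow$ semiparallel, this gives the key reduction: \emph{a focal submanifold is semiparallel if and only if it is parallel}. This is precisely where Simons' formula enters, and it is the structural heart of the argument, valid uniformly for all $g$ once minimality and compactness are in hand.

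Next I would exploit the Gauss equation: if $\bar\nabla B=0$ then the intrinsic curvature tensor is parallel, so $M$ is locally symmetric and in particular Ricci-parallel. Thus every semiparallel focal submanifold is Ricci-parallel, and Theorem~\ref{thmQTY} immediately excludes all focal submanifolds of $g=4$ that do not appear in its list: being non-Ricci-parallel, they are neither parallel nor semiparallel. This settles the negative half of (ii). For $g=6$, the two homogeneous families ($m_1=m_2=1$ and $2$) are not Ricci-parallel (cf. \cite{QTY},\cite{LY},\cite{TY3}), hence not parallel, which proves (iii). It then remains only to confirm that the candidates surviving this filter are genuinely parallel.

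For $g=3$, both focal submanifolds are congruent to the Veronese embeddings of the projective planes over $\mathbb{R},\mathbb{C},\mathbb{H},\mathbb{O}$, which are extrinsically symmetric and hence parallel; this gives (i). For $g=4$, I would verify $\bar\nabla B=0$ for the remaining candidates, namely the $M_+$ of OT--FKM type with $(m_1,m_2)=(2,1),(6,1)$, the homogeneous $M_+\cong Sp(2)$ with $(4,3)$, all $M_-$ of OT--FKM type with $(1,k)$, and the one diffeomorphic to $\widetilde{G}_2(\mathbb{R}^5)$ with $(2,2)$, either by identifying them with standard extrinsically symmetric embeddings or, more computationally, by evaluating $\bar\nabla B$ through the Clifford system $\{P_0,\dots,P_m\}$ using the explicit shape operators of the focal submanifolds.

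I expect the main obstacle to lie precisely here: controlling $\bar\nabla B$ — equivalently, directly checking $\bar R\cdot B=0$ — across the distinct curvature distributions of these OT--FKM focal submanifolds, where the eigenvalue structure of the shape operators and the commutators $[A_\alpha,A_\beta]$ governing $R^\perp$ must be handled case by case, and confirming the non-Ricci-parallelism of the $g=6$ examples from their explicit curvature data. Should a clean reference for the latter be unavailable, I would instead exhibit, at a single point, tangent vectors for which $(\bar R(X,Y)\cdot B)(Z,W)\neq 0$, which already contradicts semiparallelism. The Simons--Bochner reduction, by contrast, is entirely uniform and reduces the whole problem to these finitely many parallelism verifications.
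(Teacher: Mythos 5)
Your reduction via Simons' formula is correct and coincides with the paper's own Lemma 3.1: for a compact minimal submanifold, semiparallelism forces $\Delta B=0$, and then the Bochner/Simons identity (the paper uses the pointwise fact that $\|B\|^2$ is constant on a focal submanifold, rather than integration, but this is immaterial) yields $\|\bar{\nabla}B\|^2=0$, so semiparallel $\Leftrightarrow$ parallel $\Rightarrow$ Ricci-parallel. The genuine gap is in how you dispose of the negative half of (ii). You claim that Theorem \ref{thmQTY} ``immediately excludes all focal submanifolds of $g=4$ that do not appear in its list,'' but Theorem \ref{thmQTY} is stated \emph{except for the unclassified case} $(m_1,m_2)=(7,8)$ or $(8,7)$: Tang--Yan make no assertion whatsoever about Ricci-parallelism there, and since the isoparametric hypersurfaces with these multiplicities are themselves unclassified, there is no finite list of ``others'' you could inspect by hand. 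Covering this case is precisely the point of the paper's Remark 1.1. The paper closes it by a multiplicity-only argument: using the Ozeki--Takeuchi identities $A_\alpha^3=A_\alpha$ and $A_\alpha=A_\beta^2A_\alpha+A_\alpha A_\beta^2+A_\beta A_\alpha A_\beta$ ($\alpha\neq\beta$) one derives
\[
\|\bar{\nabla}B\|_{M_+}^2=6\,\mathrm{tr}\Bigl(\bigl(\textstyle\sum_{\alpha=1}^pA_\alpha^2\bigr)^2\Bigr)-6m_2(m_1+1)(m_1+2),
\]
and then the elementary bound $\mathrm{tr}(A^2)\geq(\mathrm{tr}A)^2/n$ applied to $A=\sum_\alpha A_\alpha^2$ gives
\[
\|\bar{\nabla}B\|_{M_+}^2\ \geq\ \frac{6m_1m_2(m_1+1)}{m_1+2m_2}\,(2m_2-m_1-2)\ >\ 0
\]
for $(m_1,m_2)=(7,8)$ and $(8,7)$, and similarly for $M_-$. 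Without some argument of this kind, independent of the classification and of Theorem \ref{thmQTY}, your proof does not establish the full statement of (ii).

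Two further points are weaker than they should be, though repairable. For the positive cases of $g=4$ you only sketch a verification (``identify with extrinsically symmetric embeddings or compute through the Clifford system''); the paper instead converts the displayed formula above into a computation of $\mathrm{tr}\bigl((\sum_\alpha A_\alpha^2)^2\bigr)$, which is immediate once the Ricci tensor is known (Einstein in the $(4,3)$ and $(2,2)$ cases, $\mathrm{Ric}=0\oplus k\,\mathrm{Id}$ for the $(1,k)$ $M_-$), giving $\|\bar{\nabla}B\|^2=0$ in each case. For $g=6$ you lean on a citation that the focal submanifolds are not Ricci-parallel; the paper does not invoke such a statement but computes $\sum_{\alpha,\beta}\|[A_\alpha,A_\beta]\|^2$ from Miyaoka's explicit shape operators and reads off $\|\bar{\nabla}B\|_{M_+}^2=84m^2(m+1)>0$ and $\|\bar{\nabla}B\|_{M_-}^2=20m^2(m+1)>0$ from Simons' formula. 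Your fallback for this case (exhibiting explicit vectors with $\bar{R}\cdot B\neq0$) is not carried out, so (iii) also remains incomplete as written.
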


\begin{rem}
Our results work on the unclassified case of $g=4$, $(m_1,m_2)=(7,8)$ or $(8,7)$.
\end{rem}

This paper is organized as follows. In Section 2, we firstly state some preliminaries on geometry of submanifolds and estimate the normal scalar curvatures of focal submanifolds. In the next section, we proved Theorem \ref{thmzl} via Simons formula .

\section{Notations and Preliminaries}
Let $f:M^n\rightarrow \mathbb{S}^{n+p}$ be a compact $n$-dimensional minimal submanifold of the unit sphere.
 Denote by $B$ the second fundamental form, $\bar{\nabla}$ the
covariant derivative
with respect to the connection in $(\mathrm{tangent~~bundle})\oplus(\mathrm{normal~~bundle})$, $\nabla$ ($\nabla^\bot$) the
 induced Levi-Civita (normal) connection in the tangent (normal) bundle, respectively. Choosing a local field $\xi_1,\cdots,\xi_p$
of orthonormal frames of $T^\bot M$, we set $A_\alpha=A_{\xi_\alpha}$, the shape operator with respect to $\xi_\alpha$.

 The first covariant derivative of $B$ is defined by
     $$(\bar{\nabla}_XB)(Z,W)=(\bar{\nabla}B)(X,Z,W):=\nabla^\bot_XB(Z,W)-B(\nabla_XZ,W)-B(Z,\nabla_XW).$$
     By the Codazzi equation, $(\bar{\nabla}_XB)(Z,W)$ is symmetric in the tangent vectors $X,Z$ and $W$.

     Define the second covariant derivative of $B$ by
     $$(\bar{\nabla}_{XY}^2B)(Z,W):=(\bar{\nabla}_X(\bar{\nabla}B))(Y,Z,W)=(\bar{\nabla}(\bar{\nabla}B))(X,Y,Z,W),$$
     where $X,Y,Z,W\in TM.$ And the (rough) Laplacian of $B$ is defined by
     $$(\Delta B)(X,Y):=\sum_{i=1}^n(\bar{\nabla}_{e_ie_i}^2B)(X,Y),$$  where $e_1,\cdots,e_n$ is a local orthonormal frame of $TM.$

The famous Simons formula states as follows:
\begin{lem}\label{simonsssssssssssss}
(\cite{Sim} \cite{CdK}) Denote by $\|B\|^2$ the squared norm of the 2nd fundamental form of a submanifold $M^n$ minimally immersed in unit sphere $\mathbb{S}^{n+p}$, then
 \begin{eqnarray}\label{simons}
 \frac{1}{2}\Delta\|B\|^2
 &=&\|\bar{\nabla}B\|^2+\langle B,\Delta B\rangle\\\nonumber
 &=&\|\bar{\nabla}B\|^2+n\|B\|^2-\sum_{\alpha,\beta=1}^p\langle A_\alpha,A_\beta\rangle^2-\sum_{\alpha,\beta=1}^p\|[A_\alpha,A_\beta]\|^2,
 \end{eqnarray}
 where $\langle A_\alpha,A_\beta\rangle:=\mathrm{tr}(A_\alpha A_\beta)$, $\|[A_\alpha,A_\beta]\|^2:=-\mathrm{tr}(A_\alpha A_\beta-A_\beta A_\alpha)^2$ and $\Delta:=\mathrm{tr}(\nabla^2)$, the Laplace operator of $M^n$.
\end{lem}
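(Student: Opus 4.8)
The plan is to prove the two equalities in turn, with essentially all the work concentrated in the second. The first equality is a formal consequence of the metric compatibility and the Leibniz rule for the connection $\bar\nabla$ acting on $B$-valued tensors: for any such tensor field $T$ one has the pointwise identity $\tfrac{1}{2}\Delta\langle T,T\rangle=\langle\bar\nabla T,\bar\nabla T\rangle+\langle T,\Delta T\rangle$, with $\Delta T=\sum_i\bar\nabla^2_{e_ie_i}T$. Specializing to $T=B$ and expanding $\tfrac12\sum_i e_ie_i\langle B,B\rangle$ in a local orthonormal frame $\{e_i\}$ that is geodesic at the chosen point yields $\tfrac{1}{2}\Delta\|B\|^2=\|\bar\nabla B\|^2+\langle B,\Delta B\rangle$. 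Hence it remains to identify $\langle B,\Delta B\rangle$ with the claimed curvature expression.

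To compute $\Delta B$, I would first record two structural inputs. By the Codazzi equation the third-order tensor $(\bar\nabla_XB)(Y,Z)$ is totally symmetric in $X,Y,Z$; and by minimality $\sum_iB(e_i,e_i)=0$, so differentiating gives $\sum_i(\bar\nabla_XB)(e_i,e_i)=0$ for every $X$, which by symmetry forces every two-slot trace of $\bar\nabla B$ to vanish. Writing
\[
(\Delta B)(X,Y)=\sum_i(\bar\nabla_{e_i}(\bar\nabla B))(e_i,X,Y),
\]
I would then commute the first two differentiation slots using the Ricci identity for $\bar\nabla$. The commutator produces the curvature operator $\bar R$ of the connection in $(\mathrm{tangent})\oplus(\mathrm{normal})$ acting on the slots of $B$, while the leftover totally symmetric term vanishes upon tracing over $e_i$ by the previous observation. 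This expresses $\Delta B$ purely through $\bar R$ and $B$.

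The final step is to pair $\Delta B$ with $B$ and substitute the structure equations of the immersion. Since the ambient space is the unit sphere, its curvature tensor is $\bar R^{\mathbb{S}}(X,Y)Z=\langle Y,Z\rangle X-\langle X,Z\rangle Y$, and this constant-curvature contribution yields the term $n\|B\|^2$. The remaining curvature splits into a tangential part, which the Gauss equation rewrites through the shape operators $A_\alpha$, and a normal part, which the Ricci equation rewrites through the normal curvature $R^\bot$, the latter being measured by the commutators $[A_\alpha,A_\beta]$. Contracting against $B$ and summing over the adapted orthonormal normal frame $\{\xi_\alpha\}$, these assemble into the two quadratic invariants $-\sum_{\alpha,\beta}\langle A_\alpha,A_\beta\rangle^2$ and $-\sum_{\alpha,\beta}\|[A_\alpha,A_\beta]\|^2$, where I would use $\|[A_\alpha,A_\beta]\|^2=-\mathrm{tr}(A_\alpha A_\beta-A_\beta A_\alpha)^2$ to match the stated normalization.

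I expect the main obstacle to be the bookkeeping in the middle step: correctly expanding the simultaneous action of $\bar R$ on the two tangential arguments and on the normal value of $B$, and then cleanly disentangling the intrinsic (Gauss) curvature from the normal (Ricci) curvature so that the two distinct invariants emerge with the correct signs. Working at a point with $\{e_i\}$ geodesic and $\{\xi_\alpha\}$ parallel in the normal connection, and exploiting the total symmetry of $\bar\nabla B$ to reduce the number of commutator terms, should keep this computation manageable.
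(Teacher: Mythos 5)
The paper does not prove this lemma at all; it quotes it directly from \cite{Sim} and \cite{CdK}, and your outline is precisely the standard argument given in those references: the Bochner-type identity $\tfrac12\Delta\langle B,B\rangle=\|\bar\nabla B\|^2+\langle B,\Delta B\rangle$, then computation of $\Delta B$ via the Ricci identity using Codazzi total symmetry and minimality (so the symmetric trace term drops), and finally substitution of the Gauss and Ricci equations in the constant-curvature ambient sphere to produce $n\|B\|^2-\sum_{\alpha,\beta}\langle A_\alpha,A_\beta\rangle^2-\sum_{\alpha,\beta}\|[A_\alpha,A_\beta]\|^2$. Your plan is correct and coincides with the classical proof the paper relies on.
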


The scalar curvature of the normal bundle is defined as
$\rho^\bot=\|R^\bot\|,$
where $R^\bot$ is the curvature tensor of the normal bundle. By the Ricci equation $\langle R^\bot(X,Y)\xi,\eta\rangle=\langle[A_\xi,A_\eta]X,Y\rangle$, the normal scalar curvature  can be rewritten as
\begin{equation}\label{normalllllllllll scalar curvature}\nonumber
\rho^\bot=\sqrt{\sum_{\alpha,\beta=1}^p\|[A_\alpha,A_\beta]\|^2}.
\end{equation}
Clearly, the normal scalar curvature is always nonnegative and the $M^n$ is normally flat if and only if $\rho^\bot=0$, and by a result of Cartan, which is equivalent to the simultaneous diagonalisation  of all shape operators $A_\xi.$

\begin{lem}
(\cite{CR})The focal submanifolds of isoparametric hypersurfaces are \emph{austere submanifolds} in unit spheres, and the shape operators of focal submanifolds are isospectral, whose principal curvatures are $\mathrm{cot}(\frac{k\pi}{g}),~1\leq k \leq g-1$ with  multiplicities $m_1$ or $m_2$, alternately.
\end{lem}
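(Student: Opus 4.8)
The plan is to read off the shape operators of the focal submanifold $M_+$ (the argument for $M_-$ is identical) from Münzner's structure theorem for the whole isoparametric foliation. Recall that along any unit-speed normal geodesic $\gamma_\xi(t)=\cos t\,p+\sin t\,\xi$ issuing from a point $p\in M_+$ in a unit normal direction $\xi\in\nu_pM_+$, Münzner's theorem gives $f(\gamma_\xi(t))=\cos(gt)$, so that the parallel family $M_t:=f^{-1}(\cos gt)$, $0<t<\pi/g$, consists of the isoparametric hypersurfaces realized as tubes around $M_+$, with $M_-$ reached at $t=\pi/g$. The known principal curvatures of $M_t$ are $\cot\!\big(t+\tfrac{(k-1)\pi}{g}\big)$, $k=1,\dots,g$, with multiplicities $m_k$ satisfying $m_k=m_{k+2}$. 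First I would recover $M_+$ as the focal limit $t\to0^+$: the branch $\cot t\to+\infty$ (multiplicity $m_1$) is the focal/collapsing distribution, which together with the radial direction spans $\nu_pM_+$, while the remaining branches stay finite.

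The analytic core is then a short tube (normal Jacobi field) computation in $\mathbb{S}^{n+1}$. Writing $A_\xi X=\cot\phi\,X$ for a principal direction $X\in T_pM_+$ with respect to the radial unit normal $\xi$, a direct differentiation of $\phi_t(p)=\cos t\,p+\sin t\,\xi$ gives $d\phi_t(X)=\tfrac{\sin(\phi-t)}{\sin\phi}X$ and, for the tube normal $\xi_t=\dot\gamma_\xi(t)$, that $A^{M_t}_{\xi_t}d\phi_t(X)=\cot(\phi-t)\,d\phi_t(X)$; letting $t\to0^+$ recovers the principal curvatures of $M_+$. Matching with Münzner's list identifies them as $\cot\!\big(\tfrac{j\pi}{g}\big)$, $j=1,\dots,g-1$, the direction of curvature $\cot\!\big(\tfrac{j\pi}{g}\big)$ inheriting multiplicity $m_{j+1}$, while the divergent branch corresponds exactly to the collapsed normal directions.

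To finish I would upgrade this from the radial normal to an arbitrary unit normal: the point is that Münzner's relation $f(\gamma_\eta(t))=\cos(gt)$ holds along the normal geodesic in every direction $\eta\in\nu_pM_+$, so the same focal-limit computation applies verbatim and shows that $A_\eta$ has the fixed spectrum $\{\cot(\tfrac{j\pi}{g}):j=1,\dots,g-1\}$, independent of $p$ and of $\eta$; this isospectrality is precisely what \cite{CR} records and is the step I expect to be the main obstacle, since it is where one must invoke the full homogeneity of the Cartan--Münzner structure rather than a single tube. Austereness is then immediate: since $\cot\!\big(\tfrac{(g-j)\pi}{g}\big)=-\cot\!\big(\tfrac{j\pi}{g}\big)$ and the period-$2$ multiplicity pattern forces $m_{j+1}=m_{g-j+1}$ (automatic when $g$ is odd, where all $m_k$ coincide), the eigenvalues of every $A_\eta$ occur in pairs $\pm\lambda$ (with a self-paired $0$ when $g$ is even), so the spectrum is symmetric about the origin and $M_+$ is austere, in particular minimal.
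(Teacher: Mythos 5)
The paper itself contains no proof of this lemma: it is quoted as a known result with the citation \cite{CR}, so there is no internal argument to compare against. What you have done is reconstruct the standard proof from the cited literature (M\"unzner \cite{Mun}, Cecil--Ryan \cite{CR}), and your reconstruction is essentially correct: the realization of the level sets $f^{-1}(\cos gt)$ as tubes of radius $t$ around $M_+$, the Jacobi-field computation $d\phi_t(X)=\tfrac{\sin(\phi-t)}{\sin\phi}X$ and $A^{M_t}_{\xi_t}d\phi_t(X)=\cot(\phi-t)\,d\phi_t(X)$, the matching of the bounded branches with $\cot\bigl(t+\tfrac{(k-1)\pi}{g}\bigr)$ in the focal limit (giving multiplicity $m_{j+1}$ for the eigenvalue $\cot\tfrac{j\pi}{g}$, hence $m_1$, $m_2$ alternately), and the parity argument $m_{j+1}=m_{g-j+1}$ yielding austereness are exactly the steps of the classical argument. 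The one step you flag as the crux --- that $f(\gamma_\eta(t))=\cos(gt)$ holds for \emph{every} unit normal $\eta\in\nu_pM_+$, which is what makes all shape operators $A_\eta$ isospectral --- is indeed a genuine theorem of M\"unzner rather than a formality; but the mechanism is not ``homogeneity'' in any transitivity sense: one restricts the Cartan--M\"unzner polynomial $F$ to the great circle spanned by $p$ and $\eta$, notes that $F(\cos t\,p+\sin t\,\eta)$ is a trigonometric polynomial of degree $g$ in $t$, and uses the Cartan--M\"unzner equations together with $F(p)=1$ and $\nabla F(p)=g\,p$ (criticality of $f$ at $p$ plus Euler's relation) to force it to equal $\cos(gt)$. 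With that step supplied or cited, your proof is complete; the remaining caveats are purely cosmetic (sign conventions relating the tube normal $\dot\gamma_\eta(t)$ to the normal used in listing the hypersurface curvatures). The benefit of your route is that it makes the lemma self-contained from the Cartan--M\"unzner structure theory stated in the paper's introduction, whereas the paper simply black-boxes it into \cite{CR}.
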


\begin{prop}\label{squared norm}
For focal submanifolds of isoparametric
hypersurfaces in $\mathbb{S}^{n+1}$ with $g\geq3$, the squared norms of the second fundamental forms $\|B\|^2$ are constants:
\begin{itemize}
\item[(i)]  For the cases of $g=3$ and multiplicities $m_1=m_2:=m,$
\begin{equation}\label{g3m1248}\nonumber
\|B\|_{M_\pm}^2=\frac{2}{3}m(m+1),  \quad m=1,2,4,8.
\end{equation}
\item[(ii)]  For the cases of $g=4$ and multiplicities $(m_1,m_2)$,
\begin{equation}\label{g4m1}\nonumber
\|B\|^2_{M_+}=2m_2(m_1+1).
\end{equation}
          Similarly,
\begin{equation}\label{g4m2}\nonumber
\|B\|^2_{M_-}=2m_1(m_2+1).
\end{equation}
\item[(iii)] For the cases of $g=6$,
\begin{equation}\label{g6m12}\nonumber
\|B\|_{M_\pm}^2=\frac{20}{3}m(m+1),  \quad m=1,2.
\end{equation}
\end{itemize}
\end{prop}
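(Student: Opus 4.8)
The plan is to reduce the whole computation to the pointwise identity
$\|B\|^2=\sum_{\alpha=1}^{p}\mathrm{tr}(A_{\xi_\alpha}^2)$, valid for any local orthonormal normal frame $\xi_1,\dots,\xi_p$, where $p$ is the codimension of the focal submanifold in $\mathbb{S}^{n+1}$ (so $p=m_1+1$ for $M_+$ and $p=m_2+1$ for $M_-$, as recorded in the introduction). This identity is obtained by expanding $\|B\|^2=\sum_{i,j}\langle B(e_i,e_j),B(e_i,e_j)\rangle$ in the frame $\{\xi_\alpha\}$ and using $\langle B(e_i,e_j),\xi_\alpha\rangle=\langle A_{\xi_\alpha}e_i,e_j\rangle$; it is manifestly frame-independent.

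The decisive input is the preceding lemma from \cite{CR}: the shape operators of a focal submanifold are \emph{isospectral}, i.e. for every point $x$ and every unit normal $\xi\in T_x^\bot M$ the operator $A_\xi$ has the same eigenvalues $\cot(\tfrac{k\pi}{g})$, $1\le k\le g-1$, with the stated alternating multiplicities. In particular $\mathrm{tr}(A_\xi^2)$ equals one and the same number $S$ for every unit normal and every point, whence $\|B\|^2=p\,S$ is a \emph{constant}, which already settles the constancy assertion. It then remains to fix the multiplicity pattern in each case and to evaluate $S=\sum_k(\mathrm{mult}_k)\cot^2(\tfrac{k\pi}{g})$.

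For $g=3$ and $g=6$ one has $m_1=m_2=:m$, so every eigenvalue has multiplicity $m$ and no ambiguity arises. For $g=3$, $S=m\bigl(\cot^2\tfrac{\pi}{3}+\cot^2\tfrac{2\pi}{3}\bigr)=\tfrac{2}{3}m$ and $p=m+1$, giving $\|B\|_{M_\pm}^2=\tfrac{2}{3}m(m+1)$. For $g=6$, $S=m\bigl(\cot^2\tfrac{\pi}{6}+\cot^2\tfrac{\pi}{3}+\cot^2\tfrac{\pi}{2}+\cot^2\tfrac{2\pi}{3}+\cot^2\tfrac{5\pi}{6}\bigr)=m\bigl(3+\tfrac13+0+\tfrac13+3\bigr)=\tfrac{20}{3}m$ and $p=m+1$, giving $\tfrac{20}{3}m(m+1)$.

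For $g=4$ the multiplicities $m_1,m_2$ may differ, so the alternating pattern must be pinned down by the dimension constraint: the multiplicities of $A_\xi$ must sum to $\dim M_\pm$. Since $\dim M_+=n-m_1=m_1+2m_2$ and the three eigenvalues $\cot\tfrac{\pi}{4},\cot\tfrac{\pi}{2},\cot\tfrac{3\pi}{4}=1,0,-1$ carry alternating multiplicities, the only consistent choice is $(m_2,m_1,m_2)$; hence $S=m_2\cdot1+m_1\cdot0+m_2\cdot1=2m_2$ and, with $p=m_1+1$, $\|B\|^2_{M_+}=2m_2(m_1+1)$, the computation for $M_-$ being identical after interchanging $m_1\leftrightarrow m_2$. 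The only genuinely delicate point is this determination of which endpoint multiplicity is $m_1$ and which is $m_2$; everything else is the bookkeeping of the weighted sum of $\cot^2(\tfrac{k\pi}{g})$, and it is precisely the vanishing of the middle eigenvalue $\cot\tfrac{\pi}{2}=0$ that makes the $g=4$ answer depend only on $m_2$ (resp. $m_1$).
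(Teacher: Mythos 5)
Your proposal is correct and follows essentially the same route as the paper, whose proof is exactly the one-line observation that $\|B\|^2=\sum_{\alpha=1}^{p}\mathrm{tr}(A_\alpha^2)$ combined with the isospectrality of the shape operators from Lemma 2.2 (\cite{CR}). Your write-up merely fills in the arithmetic of the weighted sums of $\cot^2(\tfrac{k\pi}{g})$ and the dimension-count pinning down the alternating multiplicities for $g=4$, all of which checks out.
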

\begin{proof}
The proof follows easily from $\|B\|^2=\sum_{\alpha=1}^p \mathrm{tr}(A_\alpha^2)$ and Lemma 2.2.
\end{proof}

\begin{thm}\label{normal scalar curvature}
For focal submanifolds of isoparametric
hypersurfaces in $\mathbb{S}^{n+1}$ with $g\geq3$, the normal scalar curvatures are estimated as follows:
\begin{itemize}
\item[(i)]  For the cases of $g=3$ and multiplicities $m_1=m_2=:m$,
\begin{equation}\label{shanyao}\nonumber\rho_{M_{\pm}}^\bot=\frac{2}{3}m\sqrt{2(m+1)},\quad m\in\{1,2,4,8\}.\end{equation}
\item[(ii)]  For the cases of $g=4$ and multiplicities  $(m_1,m_2)$,
\begin{equation}\label{mzheng}\nonumber
\rho_{M_{+}}^\bot\geq\sqrt{2m_1m_2(m_1+1)},\quad \rho_{M_{-}}^\bot\geq\sqrt{2m_1m_2(m_2+1)}.
\end{equation}
\item[(iii)] For the cases of $g=6$ and multiplicities $m_1=m_2=:m$,
\begin{equation}\label{daashanyao}(\rho_{M_{+}}^\bot)^2=(72+\frac{8}{9})m^2(m+1),\quad (\rho_{M_{-}}^\bot)^2=\frac{80}{9}m^2(m+1)\quad m\in\{1,2\}.\end{equation}
\end{itemize}
As a result, all the focal submanifolds with $g\geq3$ are not normally flat.
\end{thm}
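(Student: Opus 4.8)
The whole statement hinges on a single algebraic quantity, namely $\sum_{\alpha,\beta}\|[A_\alpha,A_\beta]\|^2$, because $\rho^\bot=\sqrt{\sum_{\alpha,\beta}\|[A_\alpha,A_\beta]\|^2}$; the plan is to extract this quantity from Simons' formula (Lemma \ref{simonsssssssssssss}) rather than to compute the commutators head on. The first ingredient I would establish is a normal-bundle orthogonality relation forced by the isospectrality in Lemma 2.2. Since the shape operator $A_\xi$ is linear in $\xi$ and every \emph{unit} normal $\xi$ produces the same spectrum $\{\cot(k\pi/g)\}$, the quadratic form $\xi\mapsto\mathrm{tr}(A_\xi^2)$ equals a fixed constant times $|\xi|^2$; polarizing gives $\mathrm{tr}(A_\alpha A_\beta)=\frac{\|B\|^2}{p}\,\delta_{\alpha\beta}$ in any orthonormal normal frame, whence $\sum_{\alpha,\beta}\langle A_\alpha,A_\beta\rangle^2=\frac{\|B\|^4}{p}$, where $p$ is the codimension.

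Feeding this into Simons' formula and using that $\|B\|^2$ is constant by Proposition \ref{squared norm} (so $\Delta\|B\|^2=0$), the identity collapses to
\[
\sum_{\alpha,\beta}\|[A_\alpha,A_\beta]\|^2=\|\bar\nabla B\|^2+(\dim M)\,\|B\|^2-\frac{\|B\|^4}{p}.
\]
For $g=4$ I would simply drop the nonnegative term $\|\bar\nabla B\|^2$ and substitute the data of Proposition \ref{squared norm} together with $\dim M_+=m_1+2m_2$, $p_+=m_1+1$ (and symmetrically for $M_-$). The resulting arithmetic reduces to $(\rho_{M_+}^\bot)^2\geq 2m_1m_2(m_1+1)$ and $(\rho_{M_-}^\bot)^2\geq 2m_1m_2(m_2+1)$, exactly the claimed bounds. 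Since this argument uses only isospectrality and the value of $\|B\|^2$, it is insensitive to the classification and in particular covers the unclassified $(m_1,m_2)=(7,8)$ case.

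For $g=3$ the two focal submanifolds are the standard embeddings of $\mathbb{RP}^2,\mathbb{CP}^2,\mathbb{HP}^2,\mathbb{OP}^2$, which are symmetric $R$-spaces and hence \emph{parallel} submanifolds; thus $\bar\nabla B=0$, the term $\|\bar\nabla B\|^2$ vanishes, and the displayed identity delivers the exact value $(\rho^\bot)^2=\frac{8}{9}m^2(m+1)$, i.e. $\rho^\bot=\frac{2}{3}m\sqrt{2(m+1)}$. The case $g=6$ is the genuine obstacle: here a direct substitution shows $(\dim M)\|B\|^2-\|B\|^4/p$ is \emph{negative}, so $\|\bar\nabla B\|^2$ is both large and unavoidable, and there is no parallelism to lean on. For this case I would abandon the shortcut and compute $\sum_{\alpha,\beta}\|[A_\alpha,A_\beta]\|^2$ directly from the explicit homogeneous shape operators of the two $g=6$ focal submanifolds (for $m=1,2$), the differing outputs for $M_+$ and $M_-$ reflecting their differing $\|\bar\nabla B\|^2$. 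Carrying out these commutator computations cleanly is where the main effort lies.

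Finally, in each of the three cases the computed value or the established lower bound of $(\rho^\bot)^2$ is strictly positive, since all multiplicities are at least $1$. By the criterion recalled just before Lemma 2.2 — normal flatness is equivalent to $\rho^\bot=0$, i.e. to simultaneous diagonalizability of all shape operators — this positivity shows that no focal submanifold with $g\geq3$ is normally flat, which is the assertion to be proved.
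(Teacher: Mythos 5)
Your proposal is correct, and it overlaps with the paper's proof in structure ($g=4$ via Simons formula with the gradient term dropped, $g=6$ by direct computation from Miyaoka's explicit shape operators, normal non-flatness from strict positivity), but it deviates in two substantive ways. First, for the orthogonality relation $\langle A_\alpha,A_\beta\rangle=\frac{\|B\|^2}{p}\delta_{\alpha\beta}$, the paper derives it (for $g=4$) from the Ozeki--Takeuchi identity $A_\alpha=A_\beta^2A_\alpha+A_\alpha A_\beta^2+A_\beta A_\alpha A_\beta$ together with $A_\alpha^3=A_\alpha$ \cite{OT}, whereas you get it for every $g$ at once by polarizing the quadratic form $\xi\mapsto\mathrm{tr}(A_\xi^2)$, which is constant on the unit normal sphere by isospectrality (Lemma 2.2); your argument is valid and arguably cleaner, since it needs no structure specific to $g=4$. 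Second, and more significantly, for $g=3$ the paper stays entirely algebraic: isospectrality forces $A_\xi^2=\frac13\mathrm{Id}$ for every unit normal $\xi$, polarization then gives the anticommutation relations $A_\xi A_\eta+A_\eta A_\xi=0$ for orthonormal $\xi,\eta$, and the exact value $(\rho^\bot)^2=8\sum_{\alpha<\beta}\mathrm{tr}(A_\alpha^2A_\beta^2)=\frac{8}{9}m^2(m+1)$ drops out of the trace formula directly, with no appeal to Simons' formula or to any classification. You instead import Cartan's classification (the focal submanifolds are the Veronese embeddings of $\mathbb{F}P^2$) together with Sakamoto's theorem that these standard embeddings are parallel \cite{Sak}, and then read the exact value off Simons' formula with $\bar\nabla B=0$. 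This is logically sound as a standalone proof of the stated theorem, but note the cost within the architecture of this paper: Theorem 1.2(i) and Remark 3.1 later use the $g=3$ value of $\rho^\bot$ to \emph{deduce} $\|\bar\nabla B\|^2=0$, i.e.\ to give a new proof of parallelism; if the value of $\rho^\bot$ were itself obtained from parallelism, that deduction would become circular. Your diagnosis of why the Simons lower-bound shortcut fails for $g=6$ (there $n\|B\|^2-\|B\|^4/p=-\frac{100}{9}m^2(m+1)<0$) is correct and matches the paper's recourse to the explicit homogeneous data of \cite{Miy1}, \cite{Miy2}; like the paper, you leave that computation unexecuted, so on this point neither text is more complete than the other.
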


\begin{proof}
 By definition, the normal scalar curvature is given by
 \begin{eqnarray}\label{normal scalar curvature calculation}
(\rho^\bot)^2&=&\sum_{\alpha,\beta=1}^p\|[A_\alpha,A_\beta]\|^2=\sum_{\alpha,\beta=1}^p\mathrm{tr}(A_\alpha A_\beta-A_\beta A_\alpha)(A_\beta A_\alpha-A_\alpha A_\beta)\\\nonumber&=&2\mathrm{tr}\sum_{\alpha,\beta=1}^p(A_\alpha^2A_\beta^2)-2\mathrm{tr}\sum_{\alpha,\beta=1}^p(A_\alpha A_\beta)^2.
\end{eqnarray}
We estimate the normal scalar curvatures case by case.
\begin{itemize}
\item[(i)]  For the cases of $g=3$ and multiplicities $m_1=m_2=:m$, $n=2m$, $p=m+1.$
Let $\xi$ be an arbitrary unit normal vector of a focal submanifold, by Lemma 2.2, the shape operator
$A_\xi$ has two distinct eigenvalues, $\frac{1}{\sqrt{3}}$ and $-\frac{1}{\sqrt{3}}$, with the same multiplicities $m$.
It follows easily that \begin{equation}\label{xiaofeng}(A_\xi)^2=\frac{1}{3}\mathrm{Id}.\end{equation}
Since the shape operators defined on the unit normal bundle are isospectral, for any two orthogonal unit normal vectors $\xi$ and $\eta$,
 $A_{\frac{1}{\sqrt{2}}(\xi+\eta)}$ still satisfies
 \begin{equation}\nonumber A_{\frac{1}{\sqrt{2}}(\xi+\eta)}^2=\frac{1}{3}\mathrm{Id},\end{equation}that is
 \begin{equation}\nonumber\frac{1}{2}A_{\xi}^2+\frac{1}{2}A_{\xi}A_{\eta}+\frac{1}{2}A_{\eta}A_{\xi}+\frac{1}{2}A_{\eta}^2=\frac{1}{3}\mathrm{Id},\end{equation}
 hence
 \begin{equation}\nonumber A_{\xi}A_{\eta}+A_{\eta}A_{\xi}=0,\end{equation} which follows
 \begin{equation}\label{qiaofeng} \mathrm{tr}(A_{\xi}A_{\eta})^2=-\mathrm{tr}(A_{\eta}^2A_{\xi}^2)=-\mathrm{tr}(A_{\xi}^2A_{\eta}^2).\end{equation}
 Substituting (\ref{xiaofeng}) and (\ref{qiaofeng}) into (\ref{normal scalar curvature calculation}), one has
 \begin{eqnarray}\label{normal scalar curvature calculation 1}
(\rho_{M_{\pm}}^\bot)^2=8\sum_{1\leq\alpha<\beta\leq p}\mathrm{tr}(A_\alpha^2A_\beta^2)=8\cdot\frac{2m}{9}\cdot\frac{m(m+1)}{2}=\frac{8m^2(m+1)}{9}\nonumber.
\end{eqnarray}

\item[(ii)]  For the cases of $g=4$ and multiplicities  $(m_1,m_2)$:
Recall a formula of Ozeki-Takeuchi (see [OT] p.534 Lemma. 12. (ii))
           $$A_\alpha=A_{\beta}^{2} A_\alpha+A_{\alpha} A_{\beta}^{2}+A_{\beta} A_{\alpha} A_{\beta},\quad\alpha\neq\beta.$$
Multiply by $A_\beta$ on both sides,
          $$A_\alpha A_{\beta}=A_{\beta}^{2} A_\alpha A_{\beta}+A_{\alpha} A_{\beta}^3+A_{\beta} A_{\alpha} A_{\beta}^2,\quad\alpha\neq\beta.$$
Taking traces of both sides and observing that $A_\alpha^3=A_\alpha$ for a focal submanifold with $g=4$, we arrive at that
          $$\langle A_\alpha,A_\beta\rangle:=\mathrm{tr}(A_\alpha A_\beta)=0,\quad \alpha\neq\beta.$$
On the other hand,
          for a focal submanifold with $g=4$, says $M_+$, $\|B\|^2=2m_2(m_1+1)$ (by Proposition 2.1), $\langle A_\alpha,A_\alpha\rangle=2m_2$ (by Lemma 2.2).
Substituting these into Simons' formula (\ref{simons}), we conclude that
      \begin{eqnarray}\sum_{\alpha,\beta=1}^p|[A_\alpha,A_\beta]|_{M_+}^2&=&\|\bar{\nabla} B\|^2+2m_2(m_1+1)(2m_2+m_1)-4m_2^2(m_1+1)\nonumber\\
      &=&\|\bar{\nabla} B\|^2+2m_1m_2(m_1+1)\geq2m_1m_2(m_1+1).\nonumber
      \end{eqnarray}The calculations for $M_-$ are completely similarly.
\item[(iii)] For the cases of $g=6$ and multiplicities $m_1=m_2=:m$, all the focal submanifolds are homogeneous, and the result follows from a straightforward calculation and the explicit expressions for $A_\alpha$'s from Miyaoka (\cite{Miy1} \cite{Miy2}).
\end{itemize}
\end{proof}

\section{Semiparallel submanifolds and Simons formula}
For a semiparallel submanifold $M^n$ immersed in $\mathbb{S}^{n+p}$, denote by $e_1,\cdots,e_n$ a local orthonormal frame of $TM,$ $X,Y\in TM$, and $H$ the mean curvature vector of $M^n$, then
\begin{eqnarray}\label{h=0}(\Delta B)(X,Y)&:=&\sum_{i=1}^n(\bar{\nabla}_{e_ie_i}^2B)(X,Y)=\sum_{i=1}^n(\bar{\nabla}_{XY}^2B)(e_i,e_i)\\\nonumber
&=&\bar{\nabla}_{XY}^2(\sum_{i=1}^nB(e_i,e_i))=\bar{\nabla}_{XY}^2(H)=0,
\end{eqnarray}
where the second equality follows from the definition of semiparallel submanifolds and Codazzi equation, and the third from the commutativity of the covariant differentiations and taking trace. By the arbitrariness of $X,Y\in TM$, one concludes that $\Delta B=0$ for a minimal semiparallel submanifold.

On the other hand, for a focal submanifold of an isoparametric hypersurface, $\|B\|^2=\mathrm{const.}$ (by Proposition 2.1), $H=0$ (by Lemma 2.2), combining with the Simons formula $\frac{1}{2}\Delta\|B\|^2=\|\bar{\nabla}B\|^2+\langle B,\Delta B\rangle$ one arrives at that $\|\bar{\nabla}B\|^2+\langle B,\Delta B\rangle=0$, thus a focal submanifold of an isoparametric hypersurface is semiparallel if and only if $\|\bar{\nabla}B\|^2=0$, namely, it is a parallel submanifold. It remains to give a complete classification of the focal submanifolds satisfying $\|\bar{\nabla}B\|^2=0$.

It is convenient to define the covariant derivative $(\bar{\nabla}_{X}A)_{\alpha}$ of $B$ along $\xi_\alpha\in U^\bot M$
by $\langle(\bar{\nabla}_{X}A)_{\alpha}Y,Z\rangle=\langle(\bar{\nabla}B)(X,Y, Z),\xi_\alpha\rangle.$
It follows easily that
$$(\bar{\nabla}_{X}A)_{\alpha}Y=(\nabla_{X}A_{\alpha})Y-\sum_{\beta=1}^{p}s_{\alpha\beta}(X)A_{\beta}(Y),$$
where $s_{\alpha\beta}$ is the normal connection defined by $\nabla_{X}^{\perp}\xi_{\alpha}= \sum_{\beta=1}^ps_{\alpha\beta}(X)\xi_\beta.$ Obviously, $\|\bar{\nabla}B\|^2=\sum_{\alpha=1}^{p}\|(\bar{\nabla}A)_\alpha\|^2$, thus
\begin{equation}\label{zhangli}\|\bar{\nabla}B\|^2=0~~\Leftrightarrow~~(\bar{\nabla}_XA)_\alpha=0 ~~ \mathrm{for} ~~ \mathrm{all} ~~~ \alpha=1,\cdots,p.\end{equation}

The Ricci curvature tensor can be derived from the Gauss equation and the minimality of the focal submanifolds as
$\mathrm{Ric}=(n-1)\mathrm{Id}-\sum_{\alpha=1}^pA_\alpha^2.$
Moreover, by the fact $s_{\alpha\beta}+s_{\beta\alpha}=0$, one has $$\nabla_X\mathrm{Ric}=-\sum_{\alpha=1}^p(\nabla_{X}A)_{\alpha}A_{\alpha}-\sum_{\alpha=1}^pA_{\alpha}(\nabla_{X}A)_{\alpha}=-\sum_{\alpha=1}^p
(\bar{\nabla}_{X}A)_{\alpha}A_{\alpha}-\sum_{\alpha=1}^pA_{\alpha}(\bar{\nabla}_{X}A)_{\alpha},$$ thus
$\|\bar{\nabla}B\|^2=0$ implies ${\nabla}\mathrm{Ric}=0$ by (\ref{zhangli}), namely, Ricci-parallel. Putting all the facts above together, we conclude that
\begin{lem}\label{semiparallel ricci parallel}
A focal submanifold of an isoparametric hypersurface is semiparallel if and only if it is a parallel submanifold. In particular, a semiparallel focal submanifold must be Ricci-parallel.
\end{lem}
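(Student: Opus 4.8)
The plan is to establish the equivalence in two directions and then read off the Ricci-parallel corollary, using the Simons formula \eqref{simons} as the main engine together with the two distinguishing features of a focal submanifold: minimality (so $H=0$, by Lemma 2.2) and the constancy of $\|B\|^2$ (by Proposition \ref{squared norm}). One direction is free of hypotheses: if the immersion is parallel, $\bar{\nabla}B=0$, then $\bar{\nabla}^2B=0$, and since $\bar{R}\cdot B$ is by \eqref{semiparallel submanifolds} the antisymmetrization of $\bar{\nabla}^2B$ in its first two arguments, it vanishes. Thus parallel always implies semiparallel, and the whole content lies in the converse.

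For the converse I would first show that a minimal semiparallel submanifold has vanishing rough Laplacian, $\Delta B=0$. The key is to exploit two symmetries of the fourth-order tensor $\bar{\nabla}^2B$. The Codazzi equation makes $\bar{\nabla}B$ totally symmetric in its three arguments, so $\bar{\nabla}^2B$ is symmetric in its last three slots; the semiparallel hypothesis \eqref{semiparallel submanifolds} symmetrizes the first two slots. Since the adjacent transpositions thereby available generate the full symmetric group on four letters, $\bar{\nabla}^2B$ is in fact totally symmetric, which permits exchanging the traced pair of slots. Contracting and using that covariant differentiation commutes with the trace then gives
$$(\Delta B)(X,Y)=\sum_{i}(\bar{\nabla}_{e_ie_i}^2B)(X,Y)=\sum_{i}(\bar{\nabla}_{XY}^2B)(e_i,e_i)=\bar{\nabla}_{XY}^2H=0,$$
the last step being minimality; this is exactly the computation \eqref{h=0}.

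With $\Delta B=0$ secured, the Simons formula closes the equivalence. Because $\|B\|^2$ is constant on a focal submanifold, the left-hand side $\tfrac12\Delta\|B\|^2$ of \eqref{simons} vanishes, while the coupling term $\langle B,\Delta B\rangle$ vanishes too; hence $\|\bar{\nabla}B\|^2=0$, i.e. $\bar{\nabla}B=0$, so the submanifold is parallel. For the Ricci-parallel corollary I would differentiate the Gauss-equation identity $\mathrm{Ric}=(n-1)\mathrm{Id}-\sum_{\alpha}A_\alpha^2$, valid for a minimal submanifold of the sphere, and rewrite each $\nabla_XA_\alpha$ through the bundle-covariant derivative $(\bar{\nabla}_XA)_\alpha$; the skew-symmetry $s_{\alpha\beta}+s_{\beta\alpha}=0$ of the normal connection makes the extra connection terms cancel pairwise under the sum, so parallelism, i.e. $(\bar{\nabla}_XA)_\alpha=0$ for all $\alpha$, forces $\nabla_X\mathrm{Ric}=0$.

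The only genuinely delicate point, and the step I expect to be the main obstacle, is the promotion of $\bar{\nabla}^2B$ from ``symmetric in the last three and in the first two slots'' to ``totally symmetric'': one must check that the semiparallel identity supplies the transposition of the two differentiation slots cleanly, with no residual curvature terms, so that the contraction identity $\sum_i(\bar{\nabla}_{e_ie_i}^2B)(X,Y)=\sum_i(\bar{\nabla}_{XY}^2B)(e_i,e_i)$ holds exactly. Once this, together with the two structural inputs (minimality and constancy of $\|B\|^2$), is in place, every remaining step is formal.
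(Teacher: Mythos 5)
Your proposal is correct and takes essentially the same route as the paper: the semiparallel symmetry combined with the Codazzi symmetry justifies the trace exchange $\sum_i(\bar{\nabla}^2_{e_ie_i}B)(X,Y)=\sum_i(\bar{\nabla}^2_{XY}B)(e_i,e_i)$, whence $\Delta B=0$ by minimality, then Simons formula \eqref{simons} with $\|B\|^2$ constant gives $\|\bar{\nabla}B\|^2=0$, and Ricci-parallelism follows by differentiating $\mathrm{Ric}=(n-1)\mathrm{Id}-\sum_{\alpha}A_\alpha^2$ and using the skew-symmetry of $s_{\alpha\beta}$, exactly as in the paper. The ``delicate point'' you flag is harmless here, since the paper's definition \eqref{semiparallel submanifolds} takes $\bar{R}(X,Y)\cdot B$ to be precisely the commutator $(\bar{\nabla}^2_{XY}-\bar{\nabla}^2_{YX})B$, so symmetry in the two differentiation slots is immediate, with no residual curvature terms to check.
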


\noindent\textbf{\textbf{\emph{Proof of Theorem 1.2.}}}  \quad

     \noindent\textbf{(1) For the cases $g=3$}, $n=2m,~~p=m+1$, we have $$\|B\|^2=\frac{2}{3}m(m+1), \quad \sum_{\alpha,\beta=1}^p\|[A_\alpha,A_\beta]\|^2=\frac{8m^2(m+1)}{9},
      \quad \langle A_\alpha,A_\beta\rangle=\frac{2m}{3}\delta_{\alpha\beta}.$$
      Substituting these into Simons formula (\ref{simons}), one has
      \begin{eqnarray}\|\bar{\nabla}B\|_{M_{\pm}}^2&=&\sum_{\alpha,\beta=1}^p\langle A_\alpha,A_\beta\rangle^2+\sum_{\alpha,\beta=1}^p\|[A_\alpha,A_\beta]\|^2-n\|B\|^2\nonumber\\
      &=&\frac{(4m^2)(m+1)}{9}+\frac{8m^2(m+1)}{9}-\frac{4}{3}m^2(m+1)\nonumber\\
      &=&0,\nonumber
      \end{eqnarray}
     thus all the focal submanifolds with $g=3$ are semiparallel by Lemma 3.1.

\begin{rem}
Each focal submanifold with $g=3$ must be one of the Veronese embeddings of projective
planes $\mathbb{F}P^2$ into $\mathbb{S}^{3m+1}$,
where ${\mathbb{ F}}$ is the division algebra
$\mathbb{R}$, ${\mathbb{ C}}$, ${\mathbb{ H}}$,
or ${\mathbb{O}}$ for $m=1,2,4$, or $8,$ respectively. In facts, all the standard Veronese embeddings of compact symmetric spaces of rank one into unit spheres are known to be parallel submanifolds \cite{Sak}. Here we have provided a new proof for the cases of $\mathbb{F}P^2$, $\mathbb{F}=\mathbb{R},\mathbb{C},\mathbb{H}~~\mathrm{or}~~\mathbb{O}$.
\end{rem}

     \noindent\textbf{(2) For the cases $g=4$} and multiplicities $(m_1,m_2)$, for the focal submanifolds,
     say $M_{+}$, $n=2m_2+m_1,~~p=m_1+1$, we have \begin{equation}\label{to simons formula}\|B\|^2_{M_+}=2m_2(m_1+1),\quad \langle A_\alpha,A_\beta\rangle_{M_+}={2m_2}\delta_{\alpha\beta}.\end{equation}
     Recall a formula of Ozeki-Takeuchi (see [OT], p.534, Lemma. 12. (ii))
           $$A_\alpha=A_{\beta}^{2} A_\alpha+A_{\alpha} A_{\beta}^{2}+A_{\beta} A_{\alpha} A_{\beta},\quad \alpha\neq\beta.$$
Multiply by $A_\alpha$  and take traces on both sides,
          $$\mathrm{tr}(A_\alpha^2)=2\mathrm{tr}(A_\alpha^2A_{\beta}^{2})+\mathrm{tr}(A_{\alpha}A_{\beta})^2,$$ then
          $$m_1\|B\|^2=m_1(m_1+1)\mathrm{tr}(A_\alpha^2)=2\sum_{\alpha\neq\beta}\mathrm{tr}(A_\alpha^2A_{\beta}^{2})+\sum_{\alpha\neq\beta}\mathrm
          {tr}(A_{\alpha}A_{\beta})^2.$$ On the other hand, by the formula $(A_\alpha)^3=A_\alpha$ for $1\leq\alpha\leq p$ (see [OT] p.534 Lemma. 12. (i)),
           $$3\|B\|^2=2\sum_{\alpha=\beta}\mathrm{tr}(A_\alpha^2A_{\beta}^{2})+\sum_{\alpha=\beta}\mathrm
          {tr}(A_{\alpha}A_{\beta})^2,$$
          thus
           \begin{equation}\label{OTlemma12}(m_1+3)\|B\|^2=2\sum_{\alpha,\beta=1}^p\mathrm{tr}(A_\alpha^2A_{\beta}^{2})+\sum_{\alpha,\beta=1}^p\mathrm
          {tr}(A_{\alpha}A_{\beta})^2.\end{equation}
          Substituting (\ref{OTlemma12}) into (\ref{normal scalar curvature calculation}), one concludes that for $M_+$
          \begin{eqnarray}\label{estimate normal scalar curvature calculation}
\sum_{\alpha,\beta=1}^p\|[A_\alpha,A_\beta]\|_{M_+}^2=6\mathrm{tr}(\sum_{\alpha=1}^pA_\alpha^2\sum_{\beta=1}^pA_\beta^2)-2(m_1+3)\|B\|^2.
\end{eqnarray}
Similarly, for $M_-$,\begin{eqnarray}\label{estimate normal scalar curvature calculation M-}\nonumber
\sum_{\alpha,\beta=1}^p\|[A_\alpha,A_\beta]\|_{M_-}^2=6\mathrm{tr}(\sum_{\alpha=1}^pA_\alpha^2\sum_{\beta=1}^pA_\beta^2)-2(m_2+3)\|B\|^2.
\end{eqnarray}
Moreover, substituting (\ref{to simons formula}) and (\ref{estimate normal scalar curvature calculation}) into (\ref{simons}), one arrives at
 \begin{eqnarray}\label{simons again}
\|\bar{\nabla}B\|_{M_+}^2&=&4m_2^2(m_1+1)+6\mathrm{tr}(\sum_{\alpha=1}^pA_\alpha^2\sum_{\beta=1}^pA_\beta^2)-2(m_1+3)\|B\|^2\nonumber\\& &-(2m_2+m_1)\|B\|^2\\
&=&6\mathrm{tr}(\sum_{\alpha=1}^pA_\alpha^2\sum_{\beta=1}^pA_\beta^2)-6m_2(m_1+1)(m_1+2).\nonumber
 \end{eqnarray}
 Similarly,
  \begin{eqnarray}\label{simons again again}
\|\bar{\nabla}B\|_{M_-}^2=6\mathrm{tr}(\sum_{\alpha=1}^pA_\alpha^2\sum_{\beta=1}^pA_\beta^2)-6m_1(m_2+1)(m_2+2).
 \end{eqnarray}
We firstly show that the focal submanifolds of the cases $g=4,$ $(m_1,m_2)=(7,8)$ or $(8,7)$ can not be semiparallel. In fact, by the inequality $\mathrm{tr}(A^2)\geq\frac{(\mathrm{tr}A)^2}{n}$
for symmetric matrices,
\begin{eqnarray}\label{simons againsssss}
\|\bar{\nabla}B\|_{M_+}^2&\geq&\frac{6}{m_1+2m_2}(\mathrm{tr}\sum_{\alpha=1}^pA_\alpha^2)^2-6m_2(m_1+1)(m_1+2)\nonumber\\
&=&\frac{24}{m_1+2m_2}m_2^2(m_1+1)^2-6m_2(m_1+1)(m_1+2)\nonumber\\
&=&\frac{6m_1m_2(m_1+1)}{m_1+2m_2}(2m_2-m_1-2),\nonumber
 \end{eqnarray}
 Clearly, in  the cases of $(m_1,m_2)=(7,8)$ or $(8,7)$, one has $\|\bar{\nabla}B\|_{M_+}^2>0$, $\|\bar{\nabla}B\|_{M_-}^2>0$. By Lemma \ref{semiparallel ricci parallel}, the focal submanifolds of the cases $g=4,$ $(m_1,m_2)=(7,8)$ or $(8,7)$ are not semiparallel.

 We nextly show the Ricci-parallel focal submanifolds listed in Theorem \ref{thmQTY} are all semiparallel case by case.

 For the $M_-$ of OT-FKM type with $(m_1,m_2)=(1,k)$, which can be characterized as $M_-=\{(x\mathrm{cos}\theta,x\mathrm{sin}\theta)|x \in\mathrm{S}^{k+1}(1),{\theta}\in\mathrm{S}^1(1)\} =\mathrm{S}^1(1)\times\mathrm{S}^{k+1}(1)/(\theta,x)\sim(\theta+\pi,-x)$ (see \cite{TY1} Proposition 1.1).   Clearly, the Ricci tensor of $M_-$ can be characterized as $\mathrm{Ric}=0\oplus k\cdot\mathrm{Id}_{(k+1)\times (k+1)}.$ However, by the Gauss equation $\mathrm{Ric}=(k+1)\mathrm{Id}-\sum_{\alpha=1}^{k+1}\mathrm{A}_\alpha^2$, one has
 \begin{equation}\label{1km-}\sum_{\alpha=1}^{k+1}\mathrm{A}_\alpha^2=(k+1)\oplus\mathrm{Id}_{(k+1)\times (k+1)}.\end{equation}
Substituting (\ref{1km-}) into (\ref{simons again again}), one arrives at
    \begin{eqnarray}\label{simons again again once again}\nonumber
\|\bar{\nabla}B\|_{M_-^{k+2}}^2=6((k+1)^2+(k+1))-6(k+1)(k+2)=0.
 \end{eqnarray}
    According to Lemma 3.1, $M_-$'s of OT-FKM type with $(m_1,m_2)=(1,k)$ are semiparallel submanifolds.  Furthermore, the
isoparametric families of OT-FKM type with multiplicities $(2,1)$, $(6,1)$ are congruent to those
with multiplicities $(1,2)$, $(1,6)$, respectively (\cite{FKM}), thus $M_+$'s of OT-FKM type with $(m_1 ,m_2 ) = (2,1)~~\mathrm{or}~~(6,1)$ are also semiparallel submanifolds.

    The $M_+$ of OT-FKM type with $(m_1,m_2)=(4,3)$ (the homogeneous case), as proved in (\cite{QTY}), is an Einstein manifold. In this way,
    \begin{equation}\label{43m+qixing}\sum_{\alpha=1}^{p}\mathrm{A}_\alpha^2=\frac{\|B\|^2}{10}\cdot\mathrm{Id}_{
    10\times10}=3\cdot\mathrm{Id}_{10\times10}.\end{equation}
    Substituting (\ref{43m+qixing}) into (\ref{simons again}), one arrives at
     \begin{eqnarray}\label{simons again twice}\nonumber
\|\bar{\nabla}B\|_{M_+^{10}}^2=6\cdot9\cdot10-6\cdot3\cdot5\cdot6=0.
 \end{eqnarray}
  According to Lemma 3.1, $M_+$ of OT-FKM type with $(m_1,m_2)=(4,3)$ (the homogeneous family) is semiparallel submanifold.

    The focal submanifold diffeomorphic to $\widetilde{G}_2(\mathbb{R}^5)$ in the exceptional homogeneous case with $(m_1,m_2)=(2,2)$, as proved in (\cite{QTY}), is an Einstein manifold. In this way,
    \begin{equation}\label{22m+-qixing}\sum_{\alpha=1}^{p}\mathrm{A}_\alpha^2=\frac{\|B\|^2}{6}\cdot\mathrm{Id}_{
    6\times6}=2\cdot\mathrm{Id}_{6\times6}.\end{equation}
    Substituting (\ref{22m+-qixing}) into (\ref{simons again}), one arrives at
     \begin{eqnarray}\label{simons 111 again twice third times}\nonumber
\|\bar{\nabla}B\|_{M^{10}}^2=6\cdot4\cdot6-6\cdot2\cdot3\cdot4=0.
 \end{eqnarray}
  According to Lemma 3.1, the focal submanifold diffeomorphic to $\widetilde{G}_2(\mathbb{R}^5)$ in the exceptional homogeneous case with $(m_1,m_2)=(2,2)$ is semiparallel submanifold.

  The proof of the second part of Theorem 1.2 is now complete.

 \noindent\textbf{(3) For the cases $g=6$}, $n=5m, p=m+1$, we have
 $$\|B\|^2=\frac{20}{3}m(m+1),\quad \langle A_\alpha,A_\beta\rangle=\frac{20m}{3}\delta_{\alpha\beta}.$$
      Substituting these and (\ref{daashanyao}) into Simons formula (\ref{simons}), one has
      \begin{eqnarray}\|\bar{\nabla}B\|_{M_+}^2&=&\sum_{\alpha,\beta=1}^p\langle A_\alpha,A_\beta\rangle^2+\sum_{\alpha,\beta=1}^p\|[A_\alpha,A_\beta]\|^2-n\|B\|^2\nonumber\\
      &=&\frac{(400m^2)(m+1)}{9}+(72+\frac{8}{9})m^2(m+1)-\frac{100}{3}m^2(m+1)\nonumber\\
      &=&84m^2(m+1)>0. \nonumber
      \end{eqnarray}

      Similarly,\begin{equation}\nonumber\|\bar{\nabla}B\|_{M_-}^2=20m^2(m+1)>0.\end{equation}
     Hence, none of the focal submanifolds with $g=6$ are semiparallel by Lemma 3.1.

The proof of Theorem \ref{thmzl} is now complete.
$\hfill{\square}$

\begin{ack}
The authors are greatly indebted to Professor Zizhou Tang for his valuable suggestions and encouragement.
\end{ack}

\end{document}